\newtheorem{theorem}{Theorem}           
\newtheorem{lemma}{Lemma}               
\newtheorem{corollary}{Corollary}
\theoremstyle{definition}
\newtheorem{example}{Example}
\newtheorem{remark}{Remark}
\newcommand{\mc}{\mathcal{C}}
\newcommand{\mr}{\mathbb{R}}
\newcommand{\bx}{\bar{x}}
\newcommand{\md}{\mathcal{D}}
\newcommand{\sL}{\mathcal{L}}
\begin{document}

\title[Short title]{Existence of Solutions to Nonlinear Legendre Boundary Value Problems}

\author{Benjamin Freedman and Jes\'{u}s Rodr\'{i}guez}

\address{Benjamin Freedman \\ 
Department of Mathematics \\
Box 8205, NCSU, Raleigh, NC 27695-8205\\
USA \\
\email{bnfreedm@ncsu.edu}}

\address{Jes\'{u}s Rodr\'{i}guez \\
Department of Mathematics \\
Box 8205, NCSU, Raleigh, NC 27695-8205 \\
USA \\
\email{rodrigu@ncsu.edu}}

\date{DD.MM.YYYY}                               

\keywords{boundary value problems, ordinary differential equation, nonlinear equations, fixed-point theorems}

\subjclass{34A34, 34B15, 47H09, 47H10, 47J07}

\begin{abstract}
In this paper, we consider nonlinearly perturbed Legendre differential equations subject to the usual boundary conditions. For such problems we establish sufficient conditions for the existence of solutions and in some cases we provide a qualitative description of solutions depending on a parameter. The results presented depend on the size and limiting behavior of the nonlinearities.
\end{abstract}

\maketitle

\section{Introduction}
In this paper, we discuss the solvability of boundary value problems which arise as nonlinear perturbations of the classical Legendre differential equation subject to the standard boundary conditions. The framework we present enables us to establish the existence of solutions to boundary value problems under a variety of conditions. Each approach takes advantage of the general linear Sturm-Liouville theory, in particular existing knowledge regarding the spectrum of the Legendre Sturm-Liouville operator. In section 2.1, we provide a general framework that enables us to discuss the nonlinear boundary value problem as an operator equation of the form $Lx=F(x)$ and we establish conditions for the existence of solutions in the case where the linear part $L$ is invertible. In sections 2.2 and 2.3, we don't make this invertibility assumption and results we obtain are  based on the projection scheme commonly referred to as the Lyapunov-Schmidt procedure. In section 2.2, we use fixed-point theorems to provide sufficient conditions for the solvability of the boundary value problem that depend on the limiting behavior of the nonlinearity. In section 2.3, the same projection scheme along with the implicit function theorem is used to establish the existence and qualitative properties of solutions to weakly nonlinear problems.

\indent Approaches similar to the ones appearing in this paper have been used in a variety of settings in the study of nonlinear boundary value problems. For the use of arguments similar to those in section 2.1 in the continuous and discrete cases, the reader is referred to \cite{bou}, \cite{freed}, \cite{lazlea}, \cite{sl05}, \cite{ja1}, and \cite{jsuar1}. For the general theory of projection methods in nonlinear boundary value problems we suggest \cite{sweet}. For the use of projection methods similar to those in subsections 2.2 and 2.3, see \cite{drab}, \cite{eth1}, \cite{marrod2}, \cite{marrod}, \cite{pad}, \cite{jfrod}, and \cite{ura}. For results involving topological degree theory arguments in the analysis of discrete boundary value problems, the reader may consult \cite{chendu} and \cite{yujin}. \\
\indent The classical Legendre eigenvalue-eigenfunction problems consists of finding the scalars $\mu$ and functions $x: (-1,1) \to \mr$ such that 
\begin{align}
[(1-t^2)x'(t)]'+\mu x(t)=0 \nonumber
\end{align}
for all $t \in (-1,1)$ where
\begin{align}
\lim_{t \to -1^+} x(t) ,\indent 
\lim_{t \to 1^-} x(t)  \nonumber \\
  \nonumber \\
\lim_{t \to -1^+} x'(t),\indent  \nonumber
\lim_{t \to 1^-} x'(t). \nonumber 
\end{align}
all exist and are finite. It is well-known that nontrivial solutions of this problem exist if and only if $\mu=k(k+1)$ where $k$ is a nonnegative integer. If $\mu=k(k+1)$, the only solutions are the constant multiples of the $k^{th}$ Legendre polynomial. In this paper, we consider a nonlinear perturbation of the differential equation subject to the same boundary conditions. That is, the existence of finite limits of $x(t)$ and $x'(t)$ at $1$ and $-1$.

\section{Differential Equations}

\subsection{The Case of Invertible $L$}
Even though in this paper we are mainly interested in the cases where the parameter $\mu$ in the equation below is an eigenvalue of the associated linear Legendre equation, we devote this first section to the case where $\mu \neq k(k+1)$ for any nonnegative integer $k$. We consider boundary value problems on $(-1,1)$ of the form,
\begin{align}
[(1-t^2)x'(t)]'+\mu x(t)=f(x(t)) \label{lbvp}
\end{align}
subject to the condition that the following limits exist and are finite
\begin{align}
\lim_{t \to -1^+} x(t) \nonumber , \indent
\lim_{t \to 1^-} x(t)  \\
\label{bc} \\
\lim_{t \to -1^+} x'(t) \nonumber \indent
\lim_{t \to 1^-} x'(t). \nonumber 
\end{align}
Throughout this paper, we assume that $f: \mr \to \mr$ is continuous. Let $\mathcal{L}^2$ denote the space of functions $\mathcal{L}^2=(\mathcal{L}^2[-1,1], \| \cdot \|_2)$, $X$ be defined as the subspace of functions in $\mathcal{L}^2$ where the limits appearing in \eqref{bc} exist and are finite and 
\begin{align}
D(L)=\{ x \in X: x' \text{ is absolutely continuous and } x'' \in \mathcal{L}^2\}. \nonumber
\end{align} 
In this section, we assume that $f: \mr \to \mr$ is Lipschitz.

This implies that $f \circ x \in L^2$ for all $x \in L^2$. We seek conditions under which we can guarantee the existence of a solution to the boundary value problem \eqref{lbvp}-\eqref{bc}. \\

\indent We now present some basic results regarding a closely related linear boundary value problem. If $\mu \neq k(k+1)$ for all $k$, the equation
\begin{align}
[(1-t^2)x'(t)]'+\mu x(t)&=h(t) \nonumber 
\end{align}
has exactly one solution satisfying the condition that the following limits exist and are finite
\begin{align}
\lim_{t \to -1^+} x(t) \nonumber , \indent
\lim_{t \to 1^-} x(t)  \\
 \nonumber \\
\lim_{t \to -1^+} x'(t) \nonumber \indent
\lim_{t \to 1^-} x'(t). \nonumber 
\end{align}
Define the map $L: D(L) \to \mathcal{L}^2$ by
\begin{align}
[Lx](t)=[(1-t^2)x'(t)]'+\mu x(t). \nonumber
\end{align} 
Clearly, if $\mu \neq k(k+1)$ for all $k$ then $L$ is a bijection from $D(L)$ onto $L^2$.

Let $P_k$ denote the $k^{th}$-degree Legendre polynomial and $p(t)=(1-t^2)$. From general Sturm-Liouville theory, the equation $(px')'+\lambda x=0$, subject to the condition that the limits in \eqref{bc} exist and are finite, has countably many simple eigenvalues $\lambda_k=k(k+1)$ with corresponding eigenfunctions $P_k$ for $k \geq 0$. It is also well-known that $L$ is self-adjoint and that the graph of $L$ is closed.
Further, the unique solution $x_h \in D(L)$ to $Lx=h$ guaranteed above can be represented by the eigenfunction expansion,
\begin{align}
x_h= \sum_{k=0}^{\infty} \frac{(k+\frac{1}{2}) \langle h, P_k \rangle}{[\mu-k(k+1)]} P_k \nonumber
\end{align}
where $\langle \cdot, \cdot \rangle$ denotes the standard $\mathcal{L}^2$ inner product. From this is follows that $L^{-1}$ is continuous and that
\begin{align}
\big\|L^{-1}\big\| \leq \Bigg( \sum_{k=0}^{\infty} \Bigg| \frac{1}{(\mu-k(k+1))^2(k+\frac{1}{2})} \Bigg| \Bigg)^{1/2}. \nonumber
\end{align}

This information, as well as more on the general theory of Legendre polynomials and the Legendre differential equation can be found in  \cite{can}.

The following corollary establishes the continuity of $L^{-1}$ by giving a bound on its operator norm that will be useful later.
Before presenting the next corollary, we first must introduce some notation. Let $\mc$ denote the space of continuous functions on $[-1,1]$, and $\| \cdot \|_{\infty}$ denote the supremum norm. That is, for a continuous function $x$ on $[-1,1]$,
\begin{align}
\|x\|_{\infty}=\sup_{t \in [-1,1]} |x(t)|. \nonumber
\end{align}
\begin{corollary} \label{cor1}
There exists $K>0$ such that for all $h \in Im(L) \subset \mathcal{L}^2$, the unique solution $x_h$ to the equation $Lx=h$ satisfies
\begin{align}
\| x_h \|_{\infty} \leq K \|h\| \nonumber
\end{align}
and 
\begin{align}
\|x_h' \|_{\infty} \leq K \|h\|. \nonumber
\end{align}
\end{corollary}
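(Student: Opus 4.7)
The plan is to establish both inequalities simultaneously by applying the closed graph theorem to $L^{-1}$ viewed as a map $\mathcal{L}^2 \to \mathcal{C}^1[-1,1]$, where $\mathcal{C}^1[-1,1]$ is normed by $\|y\|_{\mathcal{C}^1} := \|y\|_\infty + \|y'\|_\infty$. A direct termwise bound via the eigenfunction expansion $x_h = \sum_{k=0}^{\infty} \frac{(k+\frac{1}{2})\langle h, P_k\rangle}{\mu - k(k+1)} P_k$ goes through smoothly for $\|x_h\|_\infty$, because $|P_k(t)| \leq 1$ and $\sum_k \frac{\sqrt{k+1/2}}{|\mu - k(k+1)|}$ converges, but the same approach fails for $\|x_h'\|_\infty$: one has $\|P_k'\|_\infty = k(k+1)/2$, which grows too fast and makes the corresponding series divergent. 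So I would sidestep estimating $x_h'$ by hand and let abstract functional analysis deliver both constants at once.

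First I would verify that $L^{-1}$ genuinely maps into $\mathcal{C}^1[-1,1]$. If $x_h = L^{-1}h$, then $x_h \in D(L)$, so both $x_h$ and $x_h'$ possess finite one-sided limits at $\pm 1$ (from $x_h \in X$), while $x_h'$ is absolutely continuous in between; extending by those limits places $x_h$ in $\mathcal{C}^1[-1,1]$. Next I would check that the graph of $L^{-1}:\mathcal{L}^2 \to \mathcal{C}^1[-1,1]$ is closed. Suppose $h_n \to h$ in $\mathcal{L}^2$ and $L^{-1}h_n \to y$ in $\mathcal{C}^1[-1,1]$. Uniform convergence on the compact interval $[-1,1]$ implies $\mathcal{L}^2$ convergence, so $L^{-1}h_n \to y$ in $\mathcal{L}^2$; on the other hand, continuity of $L^{-1}:\mathcal{L}^2 \to \mathcal{L}^2$, already recorded above from the eigenfunction expansion, forces $L^{-1}h_n \to L^{-1}h$ in $\mathcal{L}^2$. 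Hence $y = L^{-1}h$ almost everywhere, and since both sides are continuous they coincide in $\mathcal{C}^1[-1,1]$.

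Both $\mathcal{L}^2$ and $\mathcal{C}^1[-1,1]$ are Banach spaces, so the closed graph theorem supplies a constant $K > 0$ with $\|L^{-1}h\|_{\mathcal{C}^1} \leq K\|h\|$ for every $h \in \mathcal{L}^2 = Im(L)$, immediately yielding both $\|x_h\|_\infty \leq K\|h\|$ and $\|x_h'\|_\infty \leq K\|h\|$. The main place to be careful is justifying the embedding $D(L) \subset \mathcal{C}^1[-1,1]$: it rests on combining the absolute continuity of $x'$ with the finite-limit conditions defining $X$ so that $x$ and $x'$ extend continuously across the endpoints. Once that is granted, everything else is routine functional analysis.
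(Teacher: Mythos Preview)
Your argument is correct and rests on the same circle of ideas as the paper---the open mapping/closed graph/bounded inverse theorems---but the packaging differs. The paper equips $D(L)$ with the mixed norm $\|z\|_{H^2}=\|z\|_\infty+\|z'\|_\infty+\|z''\|$, asserts that the resulting space $\hat{D}(L)$ is Banach and that $\hat{L}:\hat{D}(L)\to\mathcal{L}^2$ is a continuous bijection, and then invokes the bounded inverse theorem. You instead regard $L^{-1}$ as a map from $\mathcal{L}^2$ into the standard Banach space $\mathcal{C}^1[-1,1]$ and verify closedness of its graph by combining the already-established $\mathcal{L}^2$-continuity of $L^{-1}$ with the embedding $\mathcal{C}^1\hookrightarrow\mathcal{L}^2$. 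Your route has the minor advantage of landing in a space whose completeness is standard, whereas the paper must (tacitly) check that $\hat{D}(L)$ is complete under the hybrid norm; on the other hand, the paper's version avoids the separate closed-graph verification because continuity of $\hat{L}$ is immediate from the differential expression. Either way the essential content---that boundedness of $L^{-1}$ into a stronger topology follows from abstract functional analysis once one knows it is well-defined---is the same.
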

\begin{proof}
Define the map $\hat{L}: \hat{D}(L) \to Im(L)$ by 
\begin{align}
[\hat{L}x](t)=[(1-t^2)x'(t)]'+\mu x(t) \nonumber
\end{align}
where $\hat{D}(L)$ consists of the same set of functions as $D(L)$ but is endowed with the norm $\| \cdot \|_{H^2}$ given by
\begin{align}
\| z \|_{H^2}= \|z\|_{\infty}+\|z'\|_{\infty}+ \|z'' \|. \nonumber
\end{align}
Note that the map $\hat{L}$ is a continuous, linear bijection onto $\mathcal{L}^2$, and that $\hat{D}(L)$ and $Im(L)$ are Banach spaces. Therefore, by a consequence of the open mapping theorem, $\hat{L}^{-1}$ is continuous. This means there exists a $K>0$ such for any $h \in \mathcal{L}^2$ the unique solution $x_h$ to $Lx=h$ satisfies:
\begin{align}
K\|h\| \geq \|x_h\|_{H^2} \geq \|x_h\|_{\infty} \nonumber
\end{align}
and
\begin{align}
K\|h\| \geq \|x_h\|_{H^2} \geq \|x'_h\|_{\infty} \nonumber
\end{align}
as required. \qed \medskip
\end{proof}

\begin{lemma} \label{lemma1}
The map $L^{-1}: Im(L) \to \mathcal{L}^2$ is compact.
\end{lemma}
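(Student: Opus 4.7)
The plan is to deduce compactness of $L^{-1}$ directly from Corollary~\ref{cor1} by way of the Arzel\`a--Ascoli theorem, using the fact that $[-1,1]$ has finite Lebesgue measure so uniform convergence implies $\mathcal{L}^2$ convergence.

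First I would take an arbitrary bounded sequence $\{h_n\} \subset Im(L)$, say with $\|h_n\| \leq M$, and set $x_n = L^{-1} h_n$. Corollary~\ref{cor1} furnishes a constant $K>0$ such that
\begin{align}
\|x_n\|_{\infty} \leq K\|h_n\| \leq KM \quad \text{and} \quad \|x_n'\|_{\infty} \leq K\|h_n\| \leq KM \nonumber
\end{align}
uniformly in $n$. The uniform bound on $x_n$ gives pointwise boundedness of the family $\{x_n\} \subset \mc$, and the uniform bound on $x_n'$ gives equicontinuity: for all $s,t \in [-1,1]$,
\begin{align}
|x_n(s) - x_n(t)| = \left| \int_t^s x_n'(\tau)\, d\tau \right| \leq KM\, |s-t|. \nonumber
\end{align}

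By the Arzel\`a--Ascoli theorem, some subsequence $\{x_{n_j}\}$ converges uniformly on $[-1,1]$ to a continuous limit $x^*$. Since the interval has finite measure,
\begin{align}
\|x_{n_j} - x^*\|_2 \leq \sqrt{2}\, \|x_{n_j} - x^*\|_{\infty} \to 0, \nonumber
\end{align}
so the subsequence converges in $\mathcal{L}^2$. This shows $L^{-1}$ sends bounded subsets of $Im(L)$ to relatively compact subsets of $\mathcal{L}^2$, which is the definition of compactness.

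There is no serious obstacle here; the substantive work has already been done in Corollary~\ref{cor1}, which upgrades the $\mathcal{L}^2$-bound on $h$ to simultaneous sup-norm bounds on both $x_h$ and $x_h'$. The only point worth double-checking is that one genuinely needs the derivative bound (to get equicontinuity) and not merely the sup bound on $x_h$; once that is in hand, Arzel\`a--Ascoli plus the trivial comparison $\|\cdot\|_2 \leq \sqrt{2}\,\|\cdot\|_\infty$ on $[-1,1]$ finish the argument.
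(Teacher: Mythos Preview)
Your argument is correct and follows essentially the same route as the paper: both use Corollary~\ref{cor1} to obtain uniform sup-norm bounds on $x_h$ and $x_h'$, invoke Arzel\`a--Ascoli for uniform boundedness and equicontinuity, and then pass from uniform convergence to $\mathcal{L}^2$ convergence. The only cosmetic difference is that the paper phrases things in terms of bounded sets rather than sequences, and wraps the argument in an auxiliary operator $\tilde{L}$ on $(D(L),\|\cdot\|_\infty)$; your explicit mention of the inequality $\|\cdot\|_2 \leq \sqrt{2}\,\|\cdot\|_\infty$ makes the final step slightly more transparent.
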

\begin{proof}
Consider the map $\tilde{L}: \tilde{D}(L) \to Im(L)$ defined by
\begin{align}
[\tilde{L}x](t)=[(1-t^2)x'(t)]'+\mu x(t) \nonumber
\end{align}
where $\tilde{D}(L)$ consists of the same set of functions as $D(L)$ but endowed with the norm $\| \cdot \|_{\infty}$. Note that $\tilde{L}$ is invertible due to the fact that $L$ is invertible. We wish to show that $\tilde{L}$ is compact using the Arzela-Ascoli theorem.
Let $M>0$ and define $S$ to be the set $S=\{ z \in Im(L) : \|z\| \leq M \}$. Let $h \in S$ and observe that 
\begin{align}
\|\tilde{L}^{-1} h \|_{\infty} &\leq K \|h\| \nonumber \\
&\leq K M. \nonumber
\end{align}
Therefore, the $\tilde{L}^{-1}(S)$ is a uniformly bounded set of functions in $\mc$. We now wish to show that this set is equicontinuous. \\ \\
Let $h \in S$ and let $\varepsilon>0$. By the previous corollary along with the mean value theorem, for any $h \in \mathcal{L}^2$ $\tilde{L}^{-1}h$ is Lipschitz on $S$ with constant $KM$. Let $\delta=\varepsilon/KM$ and $|t_1-t_2|<\delta$.
Then we have that 
\begin{align}
|\tilde{L}^{-1}h(t_1)-\tilde{L}^{-1}h(t_2)| &\leq KM |t_1-t_2| \nonumber \\
&<\varepsilon. \nonumber
\end{align}
Therefore,  $\tilde{L}^{-1}(S)$ is an equicontinuous set of functions in $\mc$. By the Arzel\'{a}-Ascoli theorem, $\tilde{L}^{-1}: Im(L) \to \bar{D}(L)$ is compact. Therefore, it follows that $L^{-1}: Im(L) \to D(L)$ is a compact operator. \qed \medskip 

\end{proof}

We now discuss the issue of whether we can guarantee a solution  to the nonlinear boundary value problem
\begin{align}
[(1-t^2)x'(t)]'+\mu x(t)=f(x(t)) \nonumber
\end{align}
where $x$ satisfies the condition that limits in \eqref{bc} exist and are finite. Define $F: \mathcal{L}^2 \to \mathcal{L}^2$ by
\begin{align}
F(x)=f \circ x. \nonumber
\end{align}
It is evident that the boundary value problem \eqref{lbvp}-\eqref{bc} is equivalent to the operator equation $Lx=F(x)$.

\begin{theorem} \label{theo1}
Suppose that $f: \mr \to \mr$ is Lipschitz and that $\mu \neq k(k+1)$ for all nonnegative integers $k$. Then if 
\begin{align}
\lim_{|s| \to \infty} \frac{|f(s)|}{|s|}=0 \nonumber
\end{align}
there exists a solution to the boundary value problem 
\begin{align}
[(1-t^2)x'(t)]'+\mu x(t)=f(x(t)) \nonumber
\end{align}
subject to the condition that the limits in \eqref{bc} exist and are finite.
\end{theorem}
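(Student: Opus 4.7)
The plan is to recast the boundary value problem as a fixed-point problem for the map $T := L^{-1}\circ F$ on $\mathcal{L}^2$, and then apply the Schauder fixed-point theorem. Since $\mu\neq k(k+1)$, we know from the preceding discussion that $L\colon D(L)\to\mathcal{L}^2$ is a bijection onto $\mathcal{L}^2$, so $L^{-1}$ is defined on all of $\mathcal{L}^2$. The equation $Lx=F(x)$ is then equivalent to $x=T(x)$, so it suffices to produce a fixed point of $T$.

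First I would verify the hypotheses of Schauder's theorem. Compactness of $T$ follows immediately: $F$ is continuous as a self-map of $\mathcal{L}^2$ because the Lipschitz assumption on $f$ gives $\|F(x)-F(y)\|_2\le\mathrm{Lip}(f)\,\|x-y\|_2$, and $L^{-1}\colon \mathcal{L}^2\to\mathcal{L}^2$ is compact by Lemma~\ref{lemma1}; the composition of a continuous map with a compact one is compact. What remains is to exhibit a closed ball in $\mathcal{L}^2$ that is mapped into itself.

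This is where the sublinear growth hypothesis comes in. Given any $\varepsilon>0$, the condition $\lim_{|s|\to\infty}|f(s)|/|s|=0$ together with continuity of $f$ yields a constant $M_\varepsilon>0$ such that
\begin{align}
|f(s)|\le \varepsilon|s|+M_\varepsilon\qquad\text{for all }s\in\mathbb{R}. \nonumber
\end{align}
Integrating the squared inequality over $[-1,1]$ produces an estimate of the form $\|F(x)\|_2\le \varepsilon\|x\|_2+\sqrt{2}\,M_\varepsilon$, and then
\begin{align}
\|T(x)\|_2\le \|L^{-1}\|\bigl(\varepsilon\|x\|_2+\sqrt{2}\,M_\varepsilon\bigr). \nonumber
\end{align}
Now pick $\varepsilon$ small enough that $\alpha:=\varepsilon\|L^{-1}\|<1$. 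For any $R$ satisfying $\alpha R+\sqrt{2}\,M_\varepsilon\|L^{-1}\|\le R$, i.e.\ $R\ge \sqrt{2}\,M_\varepsilon\|L^{-1}\|/(1-\alpha)$, the closed ball $B_R=\{x\in\mathcal{L}^2:\|x\|_2\le R\}$ is mapped into itself by $T$. Since $B_R$ is a nonempty, closed, bounded, convex subset of the Banach space $\mathcal{L}^2$, and $T|_{B_R}$ is continuous and compact, Schauder's theorem yields a fixed point $x\in B_R$. By construction, $x=L^{-1}F(x)\in D(L)$ and satisfies $Lx=F(x)$, which is the desired solution of \eqref{lbvp} subject to \eqref{bc}.

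The only step requiring any real care is the a priori estimate producing the invariant ball, since one must convert a pointwise sublinear bound on $f$ into an $\mathcal{L}^2$ estimate on $F$; the rest of the argument is a standard compactness/continuity check using what has already been established. No smallness assumption on the Lipschitz constant of $f$ is needed because the sublinearity absorbs the loss coming from $\|L^{-1}\|$ once $\varepsilon$ is chosen sufficiently small.
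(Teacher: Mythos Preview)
Your argument is correct and is precisely the approach the paper indicates: the paper states only that the proof is a standard application of Schauder's fixed-point theorem to $L^{-1}F$ and omits the details, and you have filled those details in accurately (compactness from Lemma~\ref{lemma1} and continuity of $F$, plus the sublinear growth estimate to produce an invariant ball).
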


The proof of this theorem is a standard application of Schauder's fixed point theorem applied to the operator $L^{-1} F$. We omit the details. Note that results in this subsection depended heavily on $L$ having an inverse, which is only the case if we assume $\mu \neq k(k+1)$ for any $k \in \mathbb{N}$. The following subsections analyze situations where $L$ is not invertible.

\subsection{The Case of Non-Invertible $L$}
We will now assume that $\mu=k(k+1)$ for some $k \in \{0,1,2, \dots\}$. As a consequence of the general Sturm-Liouville theory outlined in the previous section, $\mu=k(k+1)$ implies that  the kernel of $L$ is one-dimensional and spanned by $P_k$. Further, as stated in \cite{hol}, we have that $h \in im(L)$ if and only if 
\begin{align}
\langle h, P_k \rangle=0. \nonumber
\end{align}
Therefore, it follows that $Im(L)=[\ker(L)]^{\perp}$.
In this section we will assume that $\lim_{s \to \infty} f(s)$ and $\lim_{s \to -\infty} f(s)$ exist and are finite. We denote these values by
\begin{align}
f(\infty) \equiv \lim_{s \to \infty} f(s) \indent \text{  and  } \indent
f(-\infty) \equiv \lim_{s \to -\infty} f(s). \nonumber 
\end{align}

We employ the Lyapunov-Schmidt procedure. For the readers convenience, we now outline the basic elements of this process. For more details, the reader may consult \cite{hale1}. \\
\indent First define $U:\mathcal{L}^2 \to \mathcal{L}^2$ by 
\begin{align}
[Ux](t)=\bigg( k+\frac{1}{2} \bigg) \langle x, P_k \rangle P_k(t). \nonumber
\end{align}
Note that $U$ is a projection onto $\ker(L)=span\{P_k\}$. Define the projection $E: \mathcal{L}^2 \to \mathcal{L}^2$ onto $[\ker(L)]^{\perp} =Im(L)$ by $E=I-U$.  Note that the map $L$ restricted to $D(L) \cap Im(L)$ is a bijection onto $Im(L)=Im(E)$. Therefore, it follows that there exists a linear map $M: Im(E) \to D(L) \cap Im(L)$ satisfying $LMh=h$ for all $h \in Im(L)$ and $MLx=Ex=(I-U)x$  for all $x \in D(L)$. In fact, we can represent this map $M$ with the eigenfunction expansion,
\begin{align}
[Mh](t)=\sum_{l \neq k} \frac{ (l+\frac{1}{2} )\langle h, P_l \rangle}{[\mu-l(l+1)]} P_l(t). \nonumber
\end{align}
Note that $M: Im(L) \to Im(L) \cap D(L)$ is a compact operator as a consequence of the argument appearing in lemma \ref{lemma1} along with the fact that $Im(L)$ is a closed subspace of $\mathcal{L}^2$.
Using these projections, we analyze the operator equation $Lx=F(x)$ in the following way:
  \begin{align}
Lx=F(x) &\Longleftrightarrow 
  \begin{cases}
      E(Lx-F(x))=0 \nonumber \\
     \text{\indent and} \nonumber \\
      (I-E)(Lx-F(x))=0 \nonumber 
    \end{cases} \nonumber \\ \nonumber \\
&\Longleftrightarrow 
\begin{cases}
      (I-U)x-MEF(x)=0 \nonumber \\
     \text{\indent and} \nonumber \\
      F(x) \in Im(L) \nonumber
    \end{cases}   \nonumber \\ \nonumber \\
    &\Longleftrightarrow 
\begin{cases}
      x=Ux+MEF(x) \nonumber \\
    \text{\indent and}  \nonumber \\
      \int_{-1}^1 f(x(t)) P_k(t) dt =0 \nonumber
    \end{cases}    \nonumber \\ \nonumber \\
    &\Longleftrightarrow 
 \begin{cases}
      x=\alpha P_k+w(x) \nonumber \\
   \text{\indent and}   \nonumber \\
      \int_{-1}^1 f[\alpha P_k(t)+w(x(t))] P_k(t) dt =0 \nonumber
    \end{cases}    \nonumber
\end{align} 
where $w(x)=MEF(x)$. \\

Define the constants $J_1$ and $J_2$ as follows:
\begin{align}
J_1&=f(\infty) \int_{\{t:P_k(t)>0\}} P_k(t) dt+ f(-\infty) \int_{\{t:P_k(t)<0\}} P_k(t) dt \nonumber \\
J_2&=f(\infty) \int_{\{t:P_k(t)<0\}} P_k(t) dt+ f(-\infty) \int_{\{t:P_k(t)>0\}} P_k(t) dt. \nonumber
\end{align}

Note that if $k=0$, then $J_1=g(\infty)$ and $J_2=g(-\infty)$. If $k \geq 1$, then 
\begin{align}
J_1&=\bigg( \int_{\{t:P_k(t)>0\}} P_k(t) dt \bigg) [f(\infty)-f(-\infty)] \nonumber \\
J_2&=\bigg( \int_{\{t:P_k(t)>0\}} P_k(t) dt \bigg) [f(-\infty)-f(\infty)]. \nonumber
\end{align}

\begin{theorem} \label{theo2}
Suppose that $f: \mr \to \mr$ is continuous and that $f(-\infty)$ and $f(\infty)$ exist and are finite.
Then we can guarantee a solution to the boundary value problem \eqref{lbvp}-\eqref{bc} in either of the following cases:
\begin{enumerate}[i)]
\item  If $k=0$ and $f(-\infty) f(\infty)<0$
\item If $k \geq 1$ and $f(-\infty) \neq f(\infty).$
\end{enumerate}  
\end{theorem}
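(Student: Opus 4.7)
The plan is to use the Lyapunov--Schmidt decomposition constructed just above the theorem, which reduces \eqref{lbvp}-\eqref{bc} to finding a pair $(\alpha, v) \in \mr \times (Im(L) \cap D(L))$ satisfying the auxiliary equation $v = MEF(\alpha P_k + v)$ together with the bifurcation equation $\int_{-1}^{1} f(\alpha P_k(t) + v(t)) P_k(t)\, dt = 0$. For each fixed $\alpha$ I would first produce a $v_\alpha$ solving the auxiliary equation via Schauder's fixed point theorem: since $f$ is continuous on $\mr$ and possesses finite limits at $\pm\infty$ it is bounded, so the range of $F = f\circ\cdot$ sits in a fixed ball in $\mathcal{L}^2$, and, composing with the compact operator $ME$, the map $v \mapsto MEF(\alpha P_k + v)$ is a compact self-map of a sufficiently large closed ball in $Im(L)$.

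Setting $x_\alpha = \alpha P_k + v_\alpha$, the uniform $\mathcal{L}^2$-bound on $v_\alpha = M(EF(x_\alpha))$ combined with Corollary \ref{cor1} upgrades to a uniform $\|\cdot\|_\infty$-bound on $v_\alpha$, independent of $\alpha$. Hence for each $t$ with $P_k(t) \neq 0$, the value $x_\alpha(t) = \alpha P_k(t) + v_\alpha(t)$ tends to $+\infty$ or $-\infty$ (with the same sign as $P_k(t)$) as $\alpha \to +\infty$, and to the opposite infinity as $\alpha \to -\infty$. Since the zero set of $P_k$ is finite, $f$ is bounded, and $|P_k|$ is integrable, dominated convergence yields $\int_{-1}^1 f(x_\alpha(t)) P_k(t)\, dt \to J_1$ as $\alpha \to +\infty$ and $\to J_2$ as $\alpha \to -\infty$. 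Under hypothesis (i), $J_1 = 2 f(\infty)$ and $J_2 = 2 f(-\infty)$ and the sign condition forces $J_1 J_2 < 0$; under hypothesis (ii), the computation recorded just before the theorem gives $J_2 = -J_1 \neq 0$. In either case $J_1$ and $J_2$ are nonzero real numbers of strictly opposite sign.

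The hard part is to conclude from this that the bifurcation equation actually vanishes along the branch of auxiliary solutions. Schauder's theorem does not produce a unique, nor a continuously varying, $v_\alpha$, so $\alpha \mapsto \int f(x_\alpha) P_k\, dt$ is not in an obvious way a continuous scalar function. To circumvent this I would work with the entire solution set $\mathcal{S} = \{(\alpha, v) \in \mr \times Im(L) : v = MEF(\alpha P_k + v)\}$, which is closed in $\mr \times \mathcal{L}^2$ by continuity of $MEF$ and which projects onto $\mr$ by the previous step. Using the compactness of $MEF$ together with a standard global continuation argument of Leray--Schauder type for compact perturbations of the identity, one extracts a connected component $\mathcal{C} \subset \mathcal{S}$ whose projection to $\mr$ is all of $\mr$. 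The bifurcation functional $\Phi(\alpha, v) = \int_{-1}^{1} f(\alpha P_k(t) + v(t)) P_k(t)\, dt$ is continuous on $\mathcal{S}$, takes values arbitrarily close to $J_1$ on the portion of $\mathcal{C}$ with $\alpha$ large positive and close to $J_2$ for $\alpha$ large negative, and therefore must vanish at some $(\alpha^*, v^*) \in \mathcal{C}$ by the intermediate value theorem applied to a real-valued continuous function on a connected set. The corresponding $x^* = \alpha^* P_k + v^*$ then solves \eqref{lbvp}-\eqref{bc}, and the continuation step just described is where the proof requires the most care.
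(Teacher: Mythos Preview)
Your argument is correct, but the paper takes a more elementary route that sidesteps the continuation machinery entirely. Instead of solving the auxiliary equation for each $\alpha$ separately and then invoking a Leray--Schauder connectedness argument to thread the bifurcation equation through the resulting branch, the paper packages both equations into a single compact self-map on the product space $\mathcal{L}^2\times\mr$: with $w(x)=MEF(x)$, it sets
\[
H(x,\alpha)=\Bigl(\alpha P_k+w(x),\ \alpha-\textstyle\int_{-1}^{1}f[\alpha P_k+w(x)]P_k\,dt\Bigr),
\]
so that a fixed point of $H$ solves both the auxiliary and bifurcation equations at once. The sign condition $J_1J_2<0$ (derived exactly as you do, via dominated convergence) is used not through an intermediate-value argument but to verify that $H$ maps a suitable box $\mathcal{B}=\{\|x\|\le b_1,\ |\alpha|\le\delta\}$ into itself: for $\alpha$ near $+\delta$ the bifurcation integral is positive, pulling $H_2$ back below $\alpha$, and symmetrically near $-\delta$. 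A single application of Schauder then finishes the proof. Your approach buys a cleaner conceptual separation of the two equations and would transfer more readily to settings where the product-space trick is awkward; the paper's approach buys simplicity, needing only Schauder and no degree or global-branch results.
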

\begin{proof}
We begin by noting that,
\begin{multline}
\indent \int_{-1}^1 f[\alpha P_k(t)+w(x(t))] P_k(t) dt
\\ =  \int_{\{t:P_k(t)<0\}} f[\alpha P_k(t)+w(x(t))] P_k(t) dt+ \int_{\{t:P_k(t)>0\}} f[\alpha P_k(t)+w(x(t))] P_k(t) dt. \nonumber
\end{multline}
Since $w$ is bounded, we have by the Lebesgue Dominated Convergence Theorem that
\begin{align}
\lim_{\alpha \to \infty} \int_{-1}^1 f[\alpha P_k(t)+w(x(t))] P_k(t) dt=f(\infty) \int_{\{t:P_k(t)>0\}} P_k(t) dt+ f(-\infty) \int_{\{t:P_k(t)<0\}} P_k(t) dt=J_1 \nonumber 
\end{align}
and 
\begin{align}
\lim_{\alpha \to -\infty} \int_{-1}^1 f[\alpha P_k(t)+w(x(t))] P_k(t) dt=f(\infty) \int_{\{t:P_k(t)<0\}} P_k(t) dt+ f(-\infty) \int_{\{t:P_k(t)>0\}} P_k(t) dt=J_2. \nonumber 
\end{align}
Condition $iii)$ implies that $J_1J_2<0$, and we proceed by supposing without loss of generality that $J_2<0<J_1$. 
\end{proof}
Therefore there exists $\alpha_0 >0$ such that for $\alpha \geq \alpha_0$,
\begin{align}
\int_{-1}^1 f[\alpha P_k(t)+w(x(t))] P_k(t) dt>0  \label{geq}
\end{align}
and for $\alpha \leq -\alpha_0$,
\begin{align}
\int_{-1}^1 f[\alpha P_k(t)+w(x(t))] P_k(t) dt<0. \label{leq}
\end{align}
Note that $M$ is a compact linear map from $Im(L)$ onto $D(L) \cap Im(L)$ and $E$ is a projection so $w$ is a nonlinear compact mapping. \\ \\
Define $H_1: \mathcal{L}^2 \times \mr \to \mathcal{L}^2$ by 
\begin{align}
H_1(x, \alpha)= \alpha P_k+w(x) \nonumber
\end{align}
and $H_2: \mathcal{L}^2 \times \mr \to \mr$ by 
\begin{align}
H_2(x,\alpha)= \alpha-\int_{-1}^1 f[\alpha P_k(t)+w(x(t))] P_k(t) dt. \nonumber
\end{align}
Let $H: \mathcal{L}^2 \times \mr \to \mathcal{L}^2 \times \mr$ be defined by 
\begin{align}
H(x,\alpha)=(H_1(x,\alpha), H_2(x,\alpha)). \nonumber
\end{align}
Guaranteeing a fixed point for $H$ is equivalent to guaranteeing a solution to \eqref{lbvp}-\eqref{bc}. We endow the space $\mathcal{L}^2 \times \mr$ with the norm
\begin{align}
\|(x,\alpha)\|=\max\{ \|x\|, | \alpha| \}. \nonumber
\end{align}
Define 
\begin{align}
r&=\sup_{t \in \mr} |f(t)|. \nonumber
\end{align}
The existence of $r$ is guaranteed by the continuity of $f: \mr \to \mr$ along with the fact that $f(\infty)$ and $f(-\infty)$ exist and are finite. Choose $\alpha_0>r$ so that $\eqref{geq}$ and $\eqref{leq}$ are satisfied and let $\delta=\alpha_0+r$. As stated in \cite{can}, $|P_k(t)| \leq 1$ for all $t \in [-1,1]$. We know that $f$ and $ME$ are bounded, so there exists $b_1>0$ such that for any $x \in \mathcal{L}^2$, $\alpha \in \mr$,
\begin{align}
\|H_1(x, \alpha)\| \leq b_1. \nonumber
\end{align}
Let $\mathcal{B}$ be the set
\begin{align}
\mathcal{B}=\{ (x,\alpha) \in \mathcal{L}^2 \times \mr: \|x\| \leq b_1, |\alpha| \leq \delta\}. \nonumber
\end{align}
Clearly $\|H_1(x, \alpha) \| \leq b_1$ for all $(x, \alpha) \in \mathcal{B}$ by construction, so it suffices to show that $\|H_2(x, \alpha)\| \leq \delta$ for all $(x,\alpha) \in \mathcal{B}$ in order to show that $H(\mathcal{B}) \subset \mathcal{B}$. \\ \\
Suppose that $\alpha \in [\alpha_0,\delta]$. Then 
\begin{align}
\int_{-1}^1 f[\alpha P_k(t)+w(x(t))] P_k(t) dt>0 \nonumber
\end{align}
and therefore $H_2(x,\alpha)<\alpha \leq \delta$. Further, since $\bigg| \int_{-1}^1 f[\alpha P_k(t)+w(x(t))] P_k(t) dt \bigg| \leq r$ it follows that 
\begin{align}
\alpha-\int_{-1}^1 f[\alpha P_k(t)+w(x(t))] P_k(t) dt \geq \alpha_0-r \geq 0. \nonumber
\end{align}
Therefore if $\alpha \in [\alpha_0,\delta]$ then $|H_2(x, \alpha)| \in [0,\delta]$.
Suppose that $\alpha \in [0, \alpha_0)$. Then 
\begin{align}
|H_2(x,\alpha)|&= \bigg| \alpha-\int_{-1}^1 f[\alpha P_k(t)+w(x(t))] P_k(t) dt \bigg| \nonumber \\
&\leq \alpha_0+r \nonumber \\
&=\delta. \nonumber
\end{align}
Therefore, if $(x, \alpha) \in \mathcal{B}$ and $\alpha \in [0,\delta]$ then $|H_2(x,\alpha)| \leq \delta$. \\
\indent A symmetric argument can be used to show that if $(x, \alpha) \in \mathcal{B}$ and $\alpha \in [-\delta, 0]$ then $|H_2(x,\alpha)| \leq \delta$. Therefore, $H(\mathcal{B}) \subset \mathcal{B}$. Since $H: \mathcal{L}^2 \times \mr \to \mathcal{L}^2 \times \mr$ is compact (following from the compactness of $M$) and $\mathcal{B}$ is closed, bounded, and convex it follows that $H$ is guaranteed a fixed point by Schauder's Fixed Point Theorem. \qed \medskip

\subsection{The Case of Weak Nonlinearities}
In this subsection, assume that our nonlinearity is of the form $\varepsilon f(x(t))$ where $\varepsilon$ is a real parameter and $f: \mr  \to \mr$ is continuously differentiable. That is, we now examine boundary value problems of the form
\begin{align}
[(1-t^2)x'(t)]'+\mu x(t)=\varepsilon f(x(t)) \nonumber 
\end{align}
 subject to the condition that the limits appearing in \eqref{bc} exist and are finite. Due to the fact that we will impose differentiability conditions on the function-valued operator representing our nonlinearity, we consider operators defined on the space of continuous functions. Again let $\mc$ denote the space of  continuous functions on $[-1,1]$ endowed with the supremum norm and let 
\begin{align}
\mathcal{D}=\big( C^2[-1,1] , \| \cdot \|_{\infty} \big) \subset \mc \nonumber
\end{align} 
where $C^2[-1,1]$ denotes the set of twice continuously differentiable functions on $[-1,1]$.
In this section, denote $\sL: \mathcal{D} \to \mc$ by
\begin{align}
[\sL x](t)=[(1-t^2)x'(t)]'+\mu x(t) \nonumber
\end{align}
and $F: \mc \times \mr \to \mc$ by
\begin{align}
[F (x,\varepsilon)](t)=\varepsilon f(x(t)). \nonumber
\end{align}
Suppose again that $\mu=k(k+1)$.

In this section, for $x \in \mc$ and $l \in \mathbb{N}$ we denote
\begin{align}
x_l=\Bigg[\bigg( l+\frac{1}{2} \bigg) \int_{-1}^1 P_l(t) x(t) dt \Bigg]. \nonumber
\end{align}
Define the projections $U: \mc \to \mc$ by 
\begin{align}
[Ux](t)=x_kP_k(t) \nonumber
\end{align} 
and $E: \mc \to \mc$ by $E=I-U$. Note that the map $\sL$ restricted to $\mathcal{D} \cap Im(L)$ is a bijection onto $Im(L)=Im(E)$. Therefore, it follows that there exists a linear map $M: Im(E) \to \mathcal{D} \cap Im(L)$ satisfying
\begin{align}
\sL Mh=h \nonumber
\end{align}
for all $h \in Im(L)$ and 
\begin{align}
M\sL x=Ex=(I-U)x \nonumber
\end{align}
for all $x \in \mathcal{D}$.
Note that $M$ is simply
\begin{align}
\bigg[ \sL|_{\mathcal{D} \cap Im(L)} \bigg]^{-1} \nonumber
\end{align}
 and observe further that $M$ is continuous.
We note that solving 
\begin{align}
\sL x=F(x, \varepsilon) \nonumber
\end{align}
is equivalent to solving the system
\begin{align}
 \begin{cases}
      (I-U)x-MEF(x, \varepsilon)=0 \nonumber \\
     \text{ \indent and} \nonumber \\
      U(f \circ x)=0. \nonumber
    \end{cases}  \nonumber 
    \end{align}
    Define the map $G: \mathcal{D} \times \mr \to Im(L) \times \ker(L)$ by
\begin{align}
G(x, \varepsilon)=\bigg[
\begin{array}{c}
(I-U)x-MEF(x, \varepsilon) \nonumber \\
U(f \circ x) \nonumber
\end{array}
\bigg].
\end{align}
It is well known that $F$ is continuously differentiable with respect to $x$ and for any $x \in \mc$, $\varepsilon \in \mr$,
\begin{align}
\bigg(\frac{\partial F}{\partial x} (x, \varepsilon) h\bigg)(t)=\varepsilon f'(x(t)) h(t). \nonumber
\end{align}

From that it follows that
\begin{align}
\frac{\partial G}{\partial x}(x, \varepsilon) \nonumber
\end{align}
exists for all $(x, \varepsilon) \in \mc \times \mr$ and is given by
\begin{align}
\frac{\partial G}{\partial x}(x, \varepsilon)w=\Bigg[
\begin{array}{c}
[(I-U)-\varepsilon ME(f' \circ x)] w \nonumber \\
U(f' \circ x)w  \nonumber
\end{array}
\Bigg].
\end{align}
Let $\bx=\alpha P_k$ for $\alpha \in \mr$. For $(\bx,0)$ and $w \in \mc$:
\begin{align}
\frac{\partial G}{\partial x}(\bx, 0)w=\bigg[
\begin{array}{c}
(I-U)w \nonumber \\
U(f' \circ \bx)w  \nonumber
\end{array}
\bigg].
\end{align}
Since $F \in C^1$ and $M$ is continuous, it follows that $G \in C^1$.
For $w \in \mc$, we can decompose $w$ as $w=u+v$ where 
\begin{align}
u&=w_kP_k \nonumber \\
v&=w-w_kP_k. \nonumber
\end{align} . With this in mind, we can rewrite the previous expression as
\begin{align}
\frac{\partial G}{\partial x}(\bx, 0)(u+v)=\bigg[
\begin{array}{c}
v \nonumber \\
U(f' \circ \bx)(u+v)  \nonumber
\end{array}
\bigg].
\end{align}
Define the maps $H_1: \ker(L) \to \mr$ by
\begin{align}
H_1(u)=\int_{-1}^1 P_k(t) f(u(t)) dt, \nonumber
\end{align}
$H_2:\mr \to \ker(L)$ by $H_2(\alpha)=\alpha P_k$ and finally $H: \mr \to \mr$ by $H=H_1 \circ H_2$. That is,
\begin{align}
H(\alpha)=\int_{-1}^1 P_k(t) f(\alpha P_k(t)) dt. \nonumber
\end{align}
Therefore for any number in $\mr$, $H': \ker(L) \to \mr$ exists and for $\beta \in \mr$,
\begin{align}
[H'(\alpha )](\beta)=\int_{-1}^1 P_k(t) [f'(\alpha P_k(t))](\beta P_k(t)) dt. \nonumber
\end{align}
We are now ready to give conditions for the solvability of our boundary value problems examined this section.

\begin{theorem} \label{theo3}

Suppose that there exists $\alpha_0 \in \mr$ such that $H(\alpha_0 )=0$ and $H'(\alpha_0 ) \neq 0$. 
Then there exists and open neighborhood $I \subset \mr$ of $0$ such that for any $\varepsilon \in I$ there exists a solution to
\begin{align}
[(1-t^2)x'(t)]'+\mu x(t)=\varepsilon f(x(t))  \nonumber
\end{align}
satisfying the condition that the limits appearing in \eqref{bc} exist and are finite.
\end{theorem}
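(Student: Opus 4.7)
The plan is to apply the implicit function theorem to the operator equation $G(x,\varepsilon) = 0$ at the base point $(\bx_0, 0)$ with $\bx_0 := \alpha_0 P_k$. Three items must be verified: that $(\bx_0, 0)$ is a zero of $G$, that $G$ is $C^1$ near that point, and that $T := \partial G/\partial x\,(\bx_0, 0)$ is a topological linear isomorphism onto $Im(L)\times\ker(L)$.

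First I would check $G(\bx_0, 0) = 0$. The first coordinate $(I-U)\bx_0 - MEF(\bx_0, 0)$ vanishes because $\bx_0 \in \ker(L) = Im(U)$ forces $(I-U)\bx_0 = 0$ and because $F(\cdot, 0) \equiv 0$. The second coordinate $U(f\circ\bx_0)$ equals $(k+\tfrac12)H(\alpha_0)P_k$, which is zero by the hypothesis $H(\alpha_0) = 0$. That $G$ is $C^1$ on a neighborhood of $(\bx_0, 0)$ follows from $f \in C^1$, the linearity of $F$ in $\varepsilon$, and the continuity of $M$.

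Next I would show that $T$ is a bijection. Using the decomposition $w = u + v$ with $u = w_k P_k \in \ker(L)$ and $v \in Im(E)$, the formula already derived in the paper reads $Tw = (v,\, U(f'(\alpha_0 P_k))(u+v))$. For injectivity, $Tw = 0$ first forces $v = 0$, and then $U(f'(\alpha_0 P_k))u = 0$; writing $u = cP_k$, the coefficient of $P_k$ in that expression is $c\gamma$ where
\begin{align*}
\gamma = \bigl(k+\tfrac12\bigr)\int_{-1}^1 f'(\alpha_0 P_k(t))\,P_k(t)^2\,dt.
\end{align*}
Comparing with $[H'(\alpha_0)](\beta) = \beta\int_{-1}^1 f'(\alpha_0 P_k(t))P_k(t)^2\,dt$, the hypothesis $H'(\alpha_0) \neq 0$ gives $\gamma \neq 0$, so $c = 0$. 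For surjectivity, given $(r, sP_k) \in Im(L)\times\ker(L)$, take $v := r$; the remaining scalar equation for the coefficient $c$ of $u = cP_k$ is linear with nonzero coefficient $\gamma$, hence uniquely solvable. Since $T$ is a continuous linear bijection between Banach spaces, $T^{-1}$ is continuous by the bounded inverse theorem.

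With these in hand, the $C^1$ implicit function theorem produces an open neighborhood $I\subset\mr$ of $0$ and a continuous curve $\varepsilon\mapsto x(\varepsilon)$ satisfying $x(0) = \bx_0$ and $G(x(\varepsilon),\varepsilon) = 0$ for each $\varepsilon \in I$. Unwinding the Lyapunov-Schmidt equivalence, $x(\varepsilon)$ solves $\sL x = \varepsilon(f\circ x)$ together with the boundary conditions, which is the desired conclusion. The main obstacle is the choice of norms: one must fix the Banach space structure on the codomain $Im(L)\times\ker(L)$ (viewed as a subspace of $\md\times\ker(L)$) so that $T$ is a topological, not merely algebraic, isomorphism, since IFT requires the former. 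Once this bookkeeping is in place, the continuity of $M$, $U$, and $E$ combines with the bijectivity above to deliver the result via the bounded inverse theorem.
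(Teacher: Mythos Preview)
Your proposal is correct and follows essentially the same route as the paper: verify $G(\alpha_0 P_k,0)=0$, establish that $G$ is $C^1$, show $\partial G/\partial x(\alpha_0 P_k,0)$ is a bijection via the decomposition $w=u+v$ and the hypothesis $H'(\alpha_0)\neq 0$, and then invoke the implicit function theorem. Your treatment is in fact slightly more careful than the paper's in two places---you correctly account for the $v$-contribution in the surjectivity step and you explicitly invoke the bounded inverse theorem to upgrade the algebraic bijection to a topological isomorphism---but these are refinements of the same argument rather than a different approach.
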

\begin{proof}
Recall that $G \in C^1$ and let $\bx=\alpha_0 P_k$. Then $(I-U)\bx-MEF(\bx, 0)=0$ and 
\begin{align}
UF(\bx)&=\int_{-1}^1 P_k(t) f(\alpha_0P_k(t)) dt \nonumber \\
&=H(\alpha_0P_k(t) ) \nonumber \\
&=0. \nonumber
\end{align}

Therefore $G(\bx, 0)=0$. We now wish to show that $\frac{\partial G}{\partial x}( \bx, 0)$ is a bijection from $\mc$ onto $Im(L) \times \ker(L)$. Since $\frac{\partial G}{\partial x}( \bx, 0)$ is linear, in order to show this map is injective it suffices to show that it has a trivial kernel. Suppose that $\frac{\partial G}{\partial x}( \bx, 0)(u+v)=0$. Then 
\begin{align}
0&=v \nonumber
\end{align}
and so 
\begin{align}
0=U(f' \circ \bx)u=\bigg[ \int_{-1}^1 P_k(t) [f'(\alpha_0 P_k(t))]u(t) dt \bigg] \nonumber
\end{align}
implying that $u=0$ due to our assumption that $H'(\alpha_0 ) \neq 0$. Note that since $H'(\alpha_0 )$ is a nonzero linear map from $\mr \to \mr$, then it is a bijection from $\mr$ onto $\mr$. This implies that the map $U(f' \circ \bx)$ restricted to $\ker(L)$ is a bijection onto $\ker(L)$. Given $h_1 \in Im(L)$ and $h_2 \in \ker(L)$, we have that 
\begin{align}
\frac{\partial G}{\partial x}( \bx, 0) (h_1+\hat{h}_2)=(h_1,h_2) \nonumber
\end{align}
where $\hat{h}_2$ is the unique element of $\ker(L)$ that maps to $h_2$ under $U(f' \circ \bx)$.
So $\frac{\partial G}{\partial x}( \bx, 0)$ is surjective and therefore a bijection from $\mc$ onto $Im(L) \times \ker(L)$.
 By the implicit function theorem \cite{lang}, there exists a neighborhood $V_0 \subset \mr$ of $0$ on which there exists a continuous function $\phi:V_0 \to \md$ satisfying
\begin{align}
G(\phi(\varepsilon), \varepsilon)=0 \nonumber
\end{align}
for all $\varepsilon \in V_0$. Denoting $\phi(\varepsilon)=x_\varepsilon$ we have that 
\begin{align}
0&=G(\phi(\varepsilon), \varepsilon) \nonumber \\
&=G(x_\varepsilon, \epsilon) \nonumber \\
&=\mathcal{L}x_\varepsilon-F(x_\varepsilon,\varepsilon). \nonumber
\end{align}
In other words for any $\varepsilon \in V_0$ we can guarantee a solution to 
\begin{align}
[(1-t^2)x'(t)]'+\mu x(t)=\varepsilon f(x(t)) \nonumber
\end{align}
satisfying the condition that the limits in \eqref{bc} exist and are finite. \qed \medskip
\end{proof}

\begin{remark}
Let $x_\varepsilon$ denote the solution in $\md$ guaranteed by the implicit function theorem to 
\begin{align}
[(1-t^2)x'(t)]'+\mu x(t)=\varepsilon f(x(t)).  \nonumber
\end{align}
Note that 
\begin{align}
\lim_{\varepsilon \to 0} x_\varepsilon=\bx \nonumber 
\end{align}
where this limit is in the sense of uniform convergence. That is, solutions guaranteed by the above theorem are ones that emanate from a certain solution to the linear homogeneous problem.
\end{remark}

\begin{example}
Consider the boundary value problem
\begin{align}
[(1-t^2)x'(t)]'=\varepsilon f(x(t)) \nonumber
\end{align}
on $(-1,1)$ subject to the condition that the limits in \eqref{bc} exist and are finite.

Suppose that there exists a number $\alpha_0$ such that $f(\alpha_0)=0$ and $f'(\alpha_0) \neq 0$. Then since the constant Legendre polynomial is $P_0(t)=1$, for $\alpha \in \mr$,
\begin{align}
\int_{-1}^1 P_0(t) f(\alpha P_0(t)) dt \nonumber &=\int_{-1}^1 f(\alpha) dt
\end{align}
so then $\int_{-1}^1 P_0(t) f(\alpha_0 P_0(t)) dt=0$. However, provided $\beta \neq 0$,
\begin{align}
\int_{-1}^1 P_0(t) [f'(\alpha_0 P_0(t))](\beta P_0(t)) dt &=\beta\int_{-1}^1 f'(\alpha_0 ) dt \nonumber
\end{align}
so then $\int_{-1}^1 P_0(t) [f'(\alpha_0 P_0(t))](\beta P_0(t)) dt \neq 0$. 
\end{example}

\begin{example}
Consider the boundary value problem
\begin{align}
[(1-t^2)x'(t)]'+2x(t)=\varepsilon f(x(t)) \nonumber
\end{align}
subject to the condition that the limits in \eqref{bc} exist and are finite
The constant Legendre polynomial is $P_1(t)=t$, so the condition in theorem \ref{theo3} is satisfied provided there exists a number $\alpha_0$ satisfying
\begin{align}
\int_{-1}^1 t f(\alpha_0 t) dt \nonumber
\end{align}
and $f(\alpha_0) \neq f(-\alpha_0)$.

\end{example}

\end{document}